\theoremstyle{plain}
\newtheorem{theorem}{Theorem}[section]
\newtheorem{corollary}[theorem]{Corollary}
\newtheorem{proposition}[theorem]{Proposition}
\newtheorem{question}[theorem]{Question}
\theoremstyle{remark}
\begin{document}

\date{}

\title{Lebesgue measurability of separately continuous functions
and separability}

\author{V.V. Mykhaylyuk}

\begin{abstract}
It is studied a connection between the separability and the
countable chain condition of spaces with the $L$-property (a
topological space $X$ has the $L$-property if for every
topological space $Y$, separately continuous function $f:X\times
Y\to\mathbb R$ and open set $I\subseteq \mathbb R$ the set
$f^{-1}(I)$ is a $F_{\sigma}$-set). We show that every completely
regular Baire space with the $L$-property and the countable chain
condition is separable and construct a nonseparable completely
regular space with the $L$-property and the countable chain
condition. This gives a negative answer to a question of M.~Burke.
\end{abstract}

\maketitle


Chernivtsi National University, Department of Mathematical
Analysis,

Kotsjubyns'koho 2, Chernivtsi 58012, Ukraine,  mathan@chnu.cv.ua

2000 Mathematics Subject Classification 54C05, 54C10, 54E52

\newcommand\sfrac[2]{{#1/#2}}

\section{Introduction}

A function $f:X\to\mathbb R$ defined on a topological space $X$ is
called {\it a first Baire class function} if there exists a
sequence $(f_n)^{\infty}_{n=1}$ of continuous functions
$f_n:X\to\mathbb R$ which converges pointwise to $f$ on $X$; and
{\it a first Lebesgue class function} if $f^{-1}(G)$ is a
$F_{\sigma}$-set for every open set $G\subseteq \mathbb R$.
Standard reasons (see \cite[p.~394]{Ku}) shows that every first
Baire class function is a first Lebesgue class function.

Investigations of Baire and Lebesgue classifications of separately
continuous functions were started by H.~Lebesgue in \cite{L} and
were continued in papers of many mathematicians (see \cite{MMMS}).

We say that a topological space $X$ has {\it the $B$-property}
({\it the $L$-property}) if for every topological space $Y$ each
separately continuous function $f:X\times Y\to\mathbb R$ is a
first Baire class function (a first Lebesgue class function).

It is known \cite{Ba,Bu1} that any topological space $X$ has the
$B$-property (the $L$-property) if and only if the evaluation
function $c_X:X\times C_p(X)\to\mathbb R$, $c_X(x,y)=y(x)$, is a
first Baire class function (a first Lebesgue class function),
where $C_p(X)$ means the space of continuous on $X$ functions with
the pointwise convergence topology.

Baire and Lebesgue classifications of separately continuous
function were investigated in \cite{Bu2}. In particular, it was
shown in \cite{Bu2} that any completely regular space $X$ with the
$B$-property and the countable chain condition is separable
(topological space $X$ has a countable chain condition (C.C.C.) if
every system of disjoint open in $X$ sets is at most countable).
In this connection the following question arose  in \cite[Problem
4.6]{Bu2}.

\begin{question}
{\it Is every completely regular space $X$ with the $L$-property
and the countable chain condition a separable space?}
\end{question}

In this paper we show that if a space $X$ is a Baire space then
Question 1.1 has a positive answer and construct an example which
gives a negative answer to the question in general case.

\section{Density of Baire spaces with the $L$-property}

The minimal cardinal $\aleph \geq \aleph_0$ for which any system
of disjoint open in a topological space $X$ sets has the
cardinality at most $\aleph$ is called {\it a Souslin number of
$X$} and is denoted by $c(X)$. Note that the countable chain
condition of $X$ means that $c(X)=\aleph_0$. It is easy to see
that $c(X)\leq d(X)$, where $d(X)$ is the density of $X$.

The following result implies that for a Baire space $X$ Question
1.1 has a positive answer.

\begin{theorem}
{\it Let $X$ be a completely regular Baire space with the
$L$-property. Then $c(X)=d(X)$.}
\end{theorem}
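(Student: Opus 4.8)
The plan is to prove the non-trivial inequality $d(X)\le c(X)$ by contradiction. Write $\kappa=c(X)$ and assume $\kappa<d(X)$.

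\textbf{Step 1 (a transfinite construction).} By recursion on $\alpha<\kappa^{+}$: since $|\{x_{\beta}:\beta<\alpha\}|\le\kappa<d(X)$ this set is not dense, so pick $x_{\alpha}\notin\overline{\{x_{\beta}:\beta<\alpha\}}$; by regularity pick an open $V_{\alpha}\ni x_{\alpha}$ with $\overline{V_{\alpha}}\cap\overline{\{x_{\beta}:\beta<\alpha\}}=\varnothing$; and by complete regularity pick $g_{\alpha}\in C(X,[0,1])$ with $g_{\alpha}(x_{\alpha})=1$ and $g_{\alpha}\equiv 0$ off $V_{\alpha}$, so that $g_{\alpha}(x_{\beta})=0$ for all $\beta<\alpha$. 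Since $[0,1]^{X}$ is compact, the transfinite family $(g_{\alpha})_{\alpha<\kappa^{+}}$ clusters at some $h\in[0,1]^{X}$, and every such cluster point satisfies $h(x_{\beta})=0$ for all $\beta$: a basic neighbourhood of $h$ involving finitely many of the points $x_{\beta}$ and an $\varepsilon>0$ must contain some $g_{\alpha}$ with $\alpha$ above all indices occurring, and there $g_{\alpha}$ vanishes.

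\textbf{Step 2 (invoking the $L$-property).} Set $Y=\{g_{\alpha}:\alpha<\kappa^{+}\}\cup\{h\}$ with the topology of a subspace of $C_{p}(X)$ — here one first has to guarantee $h\in C(X)$, which the construction can be arranged to secure (or one argues inside a suitable compact subspace of $C_{p}(X)$) — and let $f\colon X\times Y\to\mathbb{R}$, $f(x,g)=g(x)$, which is separately continuous. By the $L$-property $A:=f^{-1}\bigl((\tfrac12,+\infty)\bigr)=\bigcup_{n}F_{n}$ with each $F_{n}$ closed in $X\times Y$ and $F_{n}\subseteq F_{n+1}$. Since $(x_{\alpha},g_{\alpha})\in A$ for every $\alpha$, pigeonholing (using that $\kappa^{+}$ is regular) gives a fixed $N$ and $S\subseteq\kappa^{+}$ of cardinality $\kappa^{+}$ with $(x_{\alpha},g_{\alpha})\in F_{N}$ for all $\alpha\in S$.

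\textbf{Step 3 (invoking the Baire property).} One first records the elementary fact valid in any Baire space $Z$: if a nonempty open $O\subseteq Z$ is covered by countably many closed sets $C_{n}$, then $\bigcup_{n}\operatorname{int}C_{n}$ is dense in $O$ (in particular every first Lebesgue class function on $Z$ has a dense set of continuity points). Applied with $Z=X$ to the open set $O_{\alpha}=g_{\alpha}^{-1}\bigl((\tfrac12,+\infty)\bigr)$, which is covered by the closed slices $\{x:(x,g_{\alpha})\in F_{n}\}$, this produces for each $\alpha\in S$ a nonempty open $G_{\alpha}\subseteq V_{\alpha}\cap O_{\alpha}$ with $G_{\alpha}\times\{g_{\alpha}\}\subseteq F_{m(\alpha)}$ for some $m(\alpha)$; pigeonholing once more yields a fixed $M$ and $S'\subseteq S$ of size $\kappa^{+}$ with $G_{\alpha}\times\{g_{\alpha}\}\subseteq F_{M}$ and $g_{\alpha}>\tfrac12$ on $G_{\alpha}$ for every $\alpha\in S'$.

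\textbf{Step 4 (the endgame — the main obstacle).} It remains to extract from $\{G_{\alpha}:\alpha\in S'\}$ a pairwise disjoint subfamily of cardinality $\kappa^{+}$, which contradicts $c(X)=\kappa$. The idea is that if "too many" of the intersections $G_{\alpha}\cap G_{\beta}$ were nonempty, one could build a net $\bigl((z_{i},g_{\alpha_{i}})\bigr)$ inside $F_{M}$ whose second coordinates converge in $C_{p}(X)$ to $h$ and whose first coordinates converge to a point $z$ lying in the closure of $\{x_{\beta}:\beta<\kappa^{+}\}$; closedness of $F_{M}$ would then force $(z,h)\in F_{M}\subseteq A$, i.e. $h(z)>\tfrac12$, contradicting $h(x_{\beta})=0$ together with the continuity of $h$. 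Organising the indices and the points $z_{i}$ so that this passage to the limit is legitimate — so that the second coordinates genuinely accumulate at $h$ and the first at an admissible $z$ — is the delicate core of the argument, and it is precisely here that the vanishing and support bookkeeping from Step 1 must be used to the full.
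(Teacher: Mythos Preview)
Your argument has two genuine gaps. In Step~2 you need the cluster point $h$ to lie in $C_p(X)$ so that it can be included in $Y$; you assert this ``can be arranged'' but give no mechanism, and in general a pointwise cluster point of continuous functions is not continuous. Without $h\in Y$, the closure argument of Step~4 cannot conclude anything, since $F_M$ is closed only in $X\times Y$. Step~4 itself is explicitly left unfinished: even granting $h\in Y$, you give no reason why the first coordinates $z_i$ (which lie in sets $G_\alpha\subseteq V_\alpha$ built precisely to \emph{avoid} the earlier points $x_\beta$) should accumulate at a point of $\overline{\{x_\beta:\beta<\kappa^+\}}$, nor why a subnet of $(g_\alpha)_{\alpha\in S'}$ should converge to $h$ rather than to some other cluster point of the original transfinite family. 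As written, the ``delicate core'' is a hope rather than an argument.

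The paper's proof avoids all of this by working directly instead of by contradiction. It uses that $E=\{(x,y)\in X\times C_p(X):y(x)=0\}$ is a $G_\delta$, writes $E=\bigcap_n W_n$ with $W_n$ open, and for each $n$ covers a dense open subset of $X$ by at most $c(X)$ basic rectangles $U(x,n)\times V(x,n)\subseteq W_n$ around the zero function (the bound coming from a maximal disjoint refinement). The Baire property makes the intersection $X_0$ of these dense open sets dense; collecting the finitely many points that determine each neighbourhood $V(x,n)$ yields a set $B$ with $|B|\le c(X)$, and any $y\in C_p(X)$ vanishing on $B$ is forced (via the inclusion $X_0\times\{y\}\subseteq\bigcap_n W_n=E$) to vanish on $X_0$, hence everywhere. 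Thus $B$ is dense and $d(X)\le c(X)$.
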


\begin{proof}
Since the evaluation function $c_X$ is a first Lebesgue class
function, the set $E=\{(x,y):y(x)=0\}$ is a $G_{\delta}$-set in
$X\times Y$, where $Y=C_p(X)$. Choose a sequence
$(W_n)^{\infty}_{n=1}$ of open in $X\times Y$ sets $W_n$ such that
$E=\bigcap\limits_{n=1}^{\infty} W_n$. Denote by $y_0$ the
null-function on $Y$. For every $n\in \mathbb N$ and an $x\in X$
find open neighborhoods $U(x,n)$ and $V(x,n)$ of $x$ and $y_0$ in
$X$ and $Y$ respectively such that $U(x,n)\times V(x,n)\subseteq
W_n$.

Fix a $n\in \mathbb N$ and show that there exists a set
$A_n\subseteq X$ with $|A_n|\leq c(X)=\aleph$ such that the open
set $G_n=\bigcup\limits_{x\in A_n} U(x,n)$ is dense in $X$.
Consider a system $\mathcal U$ of all open in $X$ nonempty sets
$U$ such that $U\subseteq U(x,n)$ for some $x\in X$ and choose a
maximal system ${\mathcal U}'\subseteq \mathcal U$ which consists
of disjoint sets. It is clear that $|{\mathcal U}'|\leq \aleph$.
For every $U\in {\mathcal U}'$ find an $x=x(U)\in X$ such that
$U\subseteq U(x,n)$ and put $A_n=\{x(U):U\in {\mathcal U}'\}$.
Then $|A_n|\leq |{\mathcal U}'|\leq \aleph$. Besides, it follows
from the maximality of ${\mathcal U}'$ that $G_n$ is dense in $X$.

Since $X$ is a Baire space, the set
$X_0=\bigcap\limits_{n=1}^{\infty} G_n$ is dense in $X$. For every
$n\in\mathbb N$ and $x\in X$ choose a finite set $B(x,n)\subseteq
X$ such that $y\in V(x,n)$ for each $y\in Y$ with
$y|_{B(x,n)}=y_0|_{B(x,n)}$. Put $B=\bigcup\limits_{n\in\mathbb N}
\bigcup\limits_{x\in A_n} B(x,n)$. Note that $|B|\leq
\aleph_0\cdot \aleph=\aleph$.

Show that $B$ is dense in $X$. Since $X$ is a completely regular
space it is enough to prove that $y_0$ is a unique continuous on
$X$ function which equals to 0 at every point from $B$. Let $y\in
Y$ be a function such that $y(b)=0$ for every $b\in B$. Fix a
point $x\in X_0$ and an integer $n\in\mathbb N$. Find $a\in A_n$
such that $x\in U(a,n)$. Then $B(a,n)\subseteq B$ implies $y\in
V(a,n)$. Therefore $(x,y)\in W_n$. Thus $X_0\times\{y\}\subseteq
\bigcap\limits_{n=1}^{\infty} W_n =E$, that is $y(x)=0$ for every
$x\in X_0$. Hence $y=y_0$ because $X_0$ is dense in $X$.

Thus $d(X)\leq |B|\leq c(X)$. Therefore $c(X)=d(X)$.
\end{proof}

\begin{corollary}
{\it Every completely regular Baire space with the $L$-property
and the countable chain condition is a separable space.}
\end{corollary}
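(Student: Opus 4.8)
The plan is to derive the corollary directly from Theorem 2.1, since it is essentially a specialization. First I would recall the conventions fixed at the start of Section~2: the Souslin number $c(X)$ is the least infinite cardinal bounding the sizes of disjoint families of open sets, the countable chain condition is precisely the assertion $c(X)=\aleph_0$, and separability is by definition the assertion $d(X)\le\aleph_0$, where $d(X)$ is the density.

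Given this, let $X$ be a completely regular Baire space with the $L$-property and the countable chain condition. The hypotheses of Theorem 2.1 are satisfied, so $c(X)=d(X)$. Since $X$ has the countable chain condition, $c(X)=\aleph_0$, whence $d(X)=\aleph_0$. Thus $X$ admits a countable dense subset, i.e. $X$ is separable. That is the entire argument.

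There is no genuine obstacle to overcome here; all the difficulty is concentrated in Theorem 2.1 itself (the construction of the countable set $B$ and the Baire-category argument showing it is dense). The corollary merely records the case $\aleph=\aleph_0$, and the completely-regular and Baire-space hypotheses are passed through to Theorem~2.1 unchanged rather than used a second time. If one wanted to emphasize the point, one could add a one-line remark that the Baire assumption cannot simply be dropped, as the nonseparable example promised in the abstract will show, but that is not part of the proof of the corollary.
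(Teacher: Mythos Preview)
Your derivation is correct and matches the paper's approach exactly: the corollary is stated there without proof, as an immediate specialization of Theorem~2.1 to the case $c(X)=\aleph_0$. There is nothing to add.
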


\section{Nonseparable spaces with the $L$-property and C.C.C.}

The following notion was introduced in \cite{Ba}, where some
properties of spaces with the $B$-property were studied.

A topological space $X$ with a topology $\tau$ is called {\it
quarter-stratifiable} if there exists a function $g:\mathbb
N\times X\to \tau$ such that

$(i)\,\,\,\,X=\bigcup\limits_{x\in X} g(n,x)$ for every $n\in
\mathbb N$;

$(ii)\,\,\,$ if $x\in g(n,x_n)$ for each $n\in \mathbb N$ then
$x_n\to x$.

The following result follows from \cite[Proposition 2.1]{Ka}.

\begin{proposition}
{\it Every quarter-stratifiable space $X$ has the $L$-property.}
\end{proposition}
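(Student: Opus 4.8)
The plan is to prove that every quarter-stratifiable space $X$ has the $L$-property by showing directly that for an arbitrary topological space $Y$, a separately continuous $f:X\times Y\to\mathbb R$, and an open $I\subseteq\mathbb R$, the set $f^{-1}(I)$ is an $F_\sigma$. Fix a quarter-stratifiability function $g:\mathbb N\times X\to\tau$. The idea is to approximate $f$ from the $X$-side using $g$: for each $n$ choose, for every $x\in X$, a point tied to the cell $g(n,x)$, and build a "piecewise" function $f_n$ that on the slice $\{x\}\times Y$ agrees (up to control) with $f$ at a representative point. The separate continuity in the $Y$-variable keeps the $f_n$ reasonable in $y$, while condition $(ii)$ forces $f_n\to f$ pointwise in a controlled way, so that the preimage of an open set can be written as a countable union of closed sets built from the $f_n$.

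More concretely, first I would reduce to the case $I=(a,\infty)$ (and symmetrically $(-\infty,b)$), since every open subset of $\mathbb R$ is a countable union of such rays intersected pairwise, and a countable union of $F_\sigma$-sets is $F_\sigma$; also $f^{-1}((a,b))=f^{-1}((a,\infty))\cap f^{-1}((-\infty,b))$ and the intersection of two $F_\sigma$-sets need not be $F_\sigma$ in general, so I must be a little careful — better to handle $f^{-1}(I)$ for $I=(a,b)$ directly by writing it as $\bigcup_k f^{-1}([a+\tfrac1k,b-\tfrac1k])^{\circ}$-type expressions, or simply invoke that it suffices to show $f^{-1}(U)$ is $F_\sigma$ for $U$ from a countable base closed under finite unions. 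The cleanest route: show $\{(x,y):f(x,y)>a\}$ is $F_\sigma$ for each rational $a$; then for general open $I=\bigcup_m(a_m,b_m)$, write $f^{-1}(I)=\bigcup_m\big(\{f>a_m\}\cap\{-f>-b_m\}\big)$ and note each such intersection is itself $F_\sigma$ because it equals $\bigcup_{k}\{f\ge a_m+\tfrac1k\}\cap\{f\le b_m-\tfrac1k\}$ — wait, that intersection of two closed sets is closed, so the union over $k$ is $F_\sigma$. Good; so the whole reduction goes through once I have the ray case.

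For the ray $\{f>a\}$: for $n\in\mathbb N$ and $y\in Y$, define $F_{n}=\{(x,y): \exists\, x'\in X \text{ with } x\in g(n,x') \text{ and } f(x',y)\ge a+\tfrac1n\}$ — I'd then show (using $(ii)$ and continuity of $f(\cdot,y)$ along the sequence) that $\bigcup_n (\text{a suitable closure of } F_n) \supseteq \{f>a\}$ and conversely each member sits inside $\{f\ge a\}$, after truncating the $\tfrac1n$ appropriately. The precise bookkeeping — picking the right representatives $x'(x,n)$, ensuring the resulting sets are closed in $X\times Y$ (here one uses that $g(n,x')$ is open in $X$ so "$x\in g(n,x')$" gives an open, not closed, condition, which must be combined with something closed coming from $f$), and verifying both inclusions via $(ii)$ — is where the real work lies. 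I expect the main obstacle to be exactly this: converting the open cells $g(n,x)$ into a description of $\{f>a\}$ as a countable union of \emph{closed} sets, since quarter-stratifiability is an "openness from inside" condition and we need closedness. The trick, as in \cite[Proposition 2.1]{Ka}, should be to take closures: put $P_n=\overline{\{(x,y):f(x,y)\ge a+\tfrac1n \text{ and the cell of } x \text{ at level } n \text{ witnesses it}\}}$ and show $\{f>a\}=\bigcup_n P_n$ using that if $(x,y)\in\{f>a\}$ then for large $n$ the point $x$ lies in some $g(n,x_n)$ with $x_n\to x$, whence $f(x_n,y)\to f(x,y)>a$, putting $(x_n,y)$ — and hence, in the limit, $(x,y)$ — into $P_n$; and conversely $P_n\subseteq\{f\ge a+\tfrac1n-\varepsilon\}$-type estimates force $P_n\subseteq\{f\ge a\}$, actually one arranges $P_n\subseteq\{f>a\}$ by a strict/nonstrict margin. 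Once that is nailed down, $\{f>a\}=\bigcup_{n}P_n$ is the desired $F_\sigma$ representation, and the reduction above finishes the proof.
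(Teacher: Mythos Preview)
The paper does not actually prove this proposition; it only records that the result follows from \cite[Proposition~2.1]{Ka}. Your sketch is therefore an attempt to rediscover Karlova's argument, and it contains a genuine gap at the reverse inclusion $P_n\subseteq\{f>a\}$. You take $P_n$ to be the closure of a subset of $\{(x,y):f(x,y)\ge a+\tfrac1n\}$ and then claim that ``strict/nonstrict margin'' estimates keep $P_n$ inside $\{f>a\}$. But $f$ is only \emph{separately} continuous, so $\{f\ge a+\tfrac1n\}$ is not closed in $X\times Y$ and its closure may well meet $\{f<a\}$; no arithmetic margin repairs this, and the cells $g(n,x)$ do not help either, since for a fixed level $n$ condition~(ii) gives no control whatsoever over the witnesses $x'$. (Your forward inclusion is also loosely argued --- you place $(x_n,y)$ in $P_n$ with $n$ varying and then ``pass to the limit'' into a single $P_n$ --- but that part is easily mended.)

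The missing idea is to build closed sets directly via an intersection rather than a closure. For $n,m\in\mathbb N$ put
\[
F_{n,m}=\bigcap_{z\in X}\Bigl((X\setminus g(m,z))\times Y\ \cup\ X\times\{y:f(z,y)\ge a+\tfrac1n\}\Bigr),
\]
which is closed since each term of the intersection is closed (here one uses continuity of $f(z,\cdot)$). Then $\{f>a\}=\bigcup_{n,M}\bigcap_{m\ge M}F_{n,m}$: for the forward inclusion, continuity of $f(\cdot,y_0)$ at $x_0$ together with condition~(ii) forces every $z$ with $x_0\in g(m,z)$ into a prescribed neighbourhood of $x_0$ once $m$ is large; for the reverse, choose $z_m$ with $x_0\in g(m,z_m)$, apply~(ii) to get $z_m\to x_0$, and pass to the limit in $f(z_m,y_0)\ge a+\tfrac1n$ using continuity in the first variable. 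Your closure-based scheme cannot be completed without this switch from an existential to a universal quantifier over the witnesses.
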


A topological space $X$ is called {\it $\sigma$-discrete} if there
exists an increasing sequence $(X_n)^{\infty}_{n=1}$ of closed
discrete subspaces $X_n$ of $X$ such that
$X=\bigcup\limits_{n=1}^{\infty} X_n$.

\begin{proposition}
{\it Every $\sigma$-discrete space is a quarter-stratifiable
space.}
\end{proposition}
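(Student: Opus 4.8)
The plan is to construct the function $g$ directly from a witnessing sequence $(X_n)_{n=1}^{\infty}$ of closed discrete subspaces with $X_n\subseteq X_{n+1}$ and $X=\bigcup_{n=1}^{\infty}X_n$. For $x\in X$ set $m(x)=\min\{n\in\mathbb N: x\in X_n\}$, which is well defined since the $X_n$ cover $X$. Since $X_n$ is a discrete subspace, for every $x\in X$ and every $n\ge m(x)$ (so that $x\in X_n$) I can choose an open set $U_n(x)\subseteq X$ with $x\in U_n(x)$ and $U_n(x)\cap X_n=\{x\}$. Then I would define $g\colon\mathbb N\times X\to\tau$ by $g(n,x)=U_{\max\{n,m(x)\}}(x)$; note $\max\{n,m(x)\}\ge m(x)$, so this is always defined, and $g(n,x)$ is an open set containing $x$.

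Condition $(i)$ is then immediate: for each fixed $n$ every point $x\in X$ lies in its own set $g(n,x)$, so $\bigcup_{x\in X}g(n,x)=X$. For condition $(ii)$ suppose $x\in g(n,x_n)$ for all $n\in\mathbb N$, and put $k=m(x)$. Fix any $n\ge k$ and let $j=\max\{n,m(x_n)\}$, so $j\ge n\ge k$. By definition $g(n,x_n)=U_{j}(x_n)$, hence $x\in U_{j}(x_n)$; and since $j\ge k=m(x)$ with the $X_i$ increasing, $x\in X_j$. Therefore $x\in U_{j}(x_n)\cap X_j=\{x_n\}$, i.e.\ $x_n=x$. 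Thus $x_n=x$ for every $n\ge m(x)$, so the sequence is eventually constant equal to $x$ and in particular $x_n\to x$.

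The step that needs care — and where a naive attempt fails — is the choice of $g(n,x)$ for $n<m(x)$. It is tempting to put $g(n,x)=X$ there, which makes $(i)$ trivial, but then a sequence $(x_n)$ with $m(x_n)>n$ for all $n$ could satisfy $x\in g(n,x_n)=X$ without converging to $x$, destroying $(ii)$. Indexing the neighbourhood at level $\max\{n,m(x)\}$ keeps $g(n,x)$ a genuinely small neighbourhood of $x$ while still trivializing $(i)$, and it is precisely this choice that makes the one-line intersection argument for $(ii)$ work. (Only the discreteness and the monotonicity of the $X_n$ are used; closedness is not needed for this proposition.)

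Once this is in place, combining it with Proposition 3.1 shows that every $\sigma$-discrete space has the $L$-property, which is what the construction in the next section will exploit.
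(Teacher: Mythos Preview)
Your proof is correct. Both your argument and the paper's build $g$ from the neighbourhoods $U_n(x)$ witnessing that $X_n$ is discrete, and both verify $(ii)$ by showing $x_n=x$ for all $n\ge m(x)$; the difference is in how the case $x\notin X_n$ is handled. The paper simply sets $g(n,x)=X\setminus X_n$ there, which is open precisely because $X_n$ is assumed closed; then for $n\ge m(x)$ the membership $x\in X_n$ rules out $x\in g(n,x_n)=X\setminus X_n$, forcing $x_n\in X_n$ and reducing to the neighbourhood case. Your device $g(n,x)=U_{\max\{n,m(x)\}}(x)$ avoids this split altogether, and your remark that closedness of the $X_n$ is then unnecessary is correct: your argument goes through for any increasing cover by discrete subspaces. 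The paper's definition is a touch simpler; yours is a touch more general.
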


\begin{proof}
Let $(X_n)_{n=1}^{\infty}$ be an increasing sequence of closed
discrete subspaces $X_n$ of $X$ such that
$X=\bigcup\limits_{n=1}^{\infty} X_n$. For every $n\in\mathbb N$
and $x\in X_n$ denote by $U(x,n)$ an open in $X$ neighborhood of
$x$ such that $U(x,n)\cap X_n=\{x\}$. A function $g:\mathbb
N\times X\to \tau$, where $\tau$ is the topology of $X$, define
letting: $g(x,n)=U(x,n)$ if $x\in X_n$ and $g(x,n)=X\setminus X_n$
if $x\not \in X_n$. It is easy to see that $g$ satisfies $(i)$ and
$(ii)$.
\end{proof}

Show now that Question 1.1 has a negative answer.

\begin{theorem}
{\it There exists a completely regular nonseparable space with the
$L$-proper\-ty and with the countable chain condition.}
\end{theorem}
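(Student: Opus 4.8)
The plan is to build the example from a $\sigma$-discrete space, since by Propositions~3.2 and~3.1 any such space automatically has the $L$-property; thus the entire task reduces to producing a completely regular, nonseparable space which is simultaneously $\sigma$-discrete and satisfies the countable chain condition. The natural candidate is a subspace of a product of a large power of a separable space — for instance a suitable dense $\sigma$-discrete subset of $\{0,1\}^{\kappa}$ (or of $[0,1]^{\kappa}$) for an uncountable $\kappa$, say $\kappa=\mathfrak c^{+}$ or $\kappa = \omega_1$ chosen so that the cardinal arithmetic works. The C.C.C. will then be inherited from the ambient product (the product of separable spaces satisfies C.C.C. by the Marczewski/Pondiczery-type theorem), and nonseparability must be arranged by making the subspace ``spread out'' enough — e.g. by ensuring it contains an uncountable discrete-in-itself piece that cannot be captured by a countable dense set.

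Concretely, I would try to exhibit $X=\bigcup_{n=1}^{\infty}X_n$ inside the cube, where each $X_n$ is closed in $X$ and discrete in the subspace topology. A workable construction: take $\kappa$ uncountable and let $X$ consist of all points of $\{0,1\}^{\kappa}$ with finite support together with a carefully chosen uncountable set $D=\{d_\alpha:\alpha<\omega_1\}$ of points of countable support, arranged so that $D$ is relatively discrete and closed, while the finite-support part is itself $\sigma$-discrete (stratified by the size of the support). One must check: (a) each stratum is closed and discrete in $X$ — for the finite-support strata this is the standard fact that $\{x:|\mathrm{supp}(x)|\le n\}$ is closed and that distinct such points are separated by a basic open set fixing finitely many coordinates; (b) the union of all the strata is all of $X$; (c) $X$ is dense in the cube or at least C.C.C. — density in the cube is the cleanest route since it transfers C.C.C. directly; (d) $X$ is not separable — here one leverages that any countable subset of $X$ lives in a product over a countable set of coordinates, so its closure in $X$ omits points whose behaviour on the remaining coordinates is non-trivial, in particular omits suitably chosen $d_\alpha$'s.

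The main obstacle is the tension between the three requirements: $\sigma$-discreteness forces the space to be, in a sense, ``small and scattered,'' while C.C.C. plus nonseparability forces it to be combinatorially rich and to fail to be concentrated on countably many coordinates. A set of finite-support points in $\{0,1\}^\kappa$ is $\sigma$-discrete and C.C.C. but it \emph{is} separable (the points supported on any fixed countable subset of coordinates are already dense among all finite-support points if $\kappa=\omega_1$ — one has to be careful here). So the real work is to enlarge this skeleton by an uncountable ``discrete obstruction'' $D$ without destroying either $\sigma$-discreteness (each $d_\alpha$ must be isolated from the others and from all but finitely many strata, which is why countable, spread-out supports for the $d_\alpha$ are natural) or C.C.C. (which is why one keeps $X$ dense in the cube). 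Getting $D$ closed and discrete in $X$ simultaneously with $X$ dense in the cube is the delicate balancing act; I expect the construction to choose the supports of the $d_\alpha$ to form an almost disjoint family (or to use a $\Delta$-system argument) so that any convergent net from $D$ would have to stabilize on infinitely many coordinates, forcing its limit out of $D$, while still every basic open box of the cube meets $X$. Once all four verifications are in place, completely regular comes free (subspace of a Tychonoff cube), $\sigma$-discreteness gives quarter-stratifiability by Proposition~3.2, and Proposition~3.1 delivers the $L$-property, completing the proof.
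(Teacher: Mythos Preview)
Your overall strategy---a dense $\sigma$-discrete subspace of a large Cantor cube $\{0,1\}^{\kappa}$, with C.C.C.\ inherited from the cube and the $L$-property coming via Propositions~3.2 and~3.1---is exactly the paper's strategy. But you have misdiagnosed where the difficulty lies, and this leads to a genuine gap.

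First, the finite-support $\Sigma$-product is \emph{not} separable: if $Z\subseteq\Sigma$ is countable, the union $B=\bigcup_{z\in Z}\operatorname{supp}(z)$ is countable, and for any $\gamma\notin B$ the basic open set $\{x:x(\gamma)=1\}$ is a neighbourhood of $\chi_{\{\gamma\}}\in\Sigma$ missing $Z$. So your worry about separability is unfounded, and the extra ``obstruction'' set $D$ is unnecessary.

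Second---and this is the real gap---the finite-support $\Sigma$-product is \emph{not} $\sigma$-discrete, and in particular your proposed stratification by support size fails. The set $\{x:|\operatorname{supp}(x)|\le n\}$ is closed but not discrete (the zero function is a limit of every infinite family of distinct singletons $\chi_{\{\gamma\}}$), while $\{x:|\operatorname{supp}(x)|=n\}$ is discrete but not closed. In fact $\Sigma$ admits no $\sigma$-discrete decomposition at all: if $\Sigma=\bigcup_n F_n$ with each $F_n$ closed and discrete, then for every $n$ there is a basic neighbourhood $W_n$ of $0$ (fixing finitely many coordinates to $0$) with $W_n\cap F_n\subseteq\{0\}$; collecting the countably many fixed coordinates into a set $C$ and choosing $\gamma\notin C$, the point $\chi_{\{\gamma\}}$ lies in every $W_n$ yet differs from $0$, hence lies in no $F_n$, a contradiction. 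Adjoining your set $D$ does nothing to repair this.

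The paper's construction avoids the trap by a tagging trick: adjoin countably many extra coordinates $a_1,a_2,\ldots$ to the index set and let $X_n$ consist of those characteristic functions $\chi_{A\cup\{a_n\}}$ with $|A|=n$ and $A\subseteq\Gamma_0\cup\{a_1,\ldots,a_{n-1}\}$. The coordinate $a_n$ records which stratum a point belongs to, so the open condition ``$x(a_k)=1$ and $x(a_i)=0$ for $k<i\le n$'' isolates $X_k$ inside $Y_n=\bigcup_{k\le n}X_k$, making $Y_n$ closed and discrete; density in the cube and nonseparability then follow exactly along the lines you sketched. The moral: the delicate point is not nonseparability versus C.C.C., but $\sigma$-discreteness versus density, and the fix is to build the stratification into the points themselves.
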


\begin{proof}
Let $\Gamma_0$ be a set with $|\Gamma_0|\geq \aleph_1$,
$(a_n)^{\infty}_{n=1}$ be a sequence of distinct points
$a_n\not\in \Gamma_0$, $\Gamma_n=\Gamma_0\cup\{a_k:1\leq k \leq
n\}$, ${\mathcal A}_n$ be a system of all subsets $A\subseteq
\Gamma_{n-1}$ such that $|A|=n$. Denote by $X_n$ a set of all
function $x\in \{0,1\}^{\Gamma}$ such that $x=\chi_{A\cup\{a_n\}}$
for some $A\in {\mathcal A}_n$, where $\chi_B$ means the
characteristic function of $B$, and put
$X=\bigcup\limits_{n=1}^{\infty} X_n$.

Show that $X$ is a $\sigma$-discrete space. For every $n\in\mathbb
N$ put $Y_n=\bigcup\limits_{k=1}^{n} X_k$. Fix an integer
$n\in\mathbb N$ and for each $1\leq k\leq n$ put $G_k=\{x\in X:
x(a_k)=1, x(a_i)=0, k< i\leq n\}$. It is easy to see that $G_k\cap
Y_n=X_k$. Since all spaces $X_k$ are discrete, $Y_n$ is discrete
in $X$ too. Besides, $Y_n$ is closed in $X$. Thus, $X$ has the
$L$-property by Propositions 3.1 and 3.2.

Note that $X$ is dense in $Y=\{0,1\}^{\Gamma}$. Indeed, let
$A\subseteq \Gamma$ be a finite set and $y:A\to\{0,1\}$. Choosing
$n\geq |A|$  with $A\subseteq \Gamma_n$ find $x\in X_{n+1}$ such
that $x|_A=y$. Then $c(X)=\aleph_0$ since $c(Y)=\aleph_0$ and $X$
is dense in $Y$.

It remains to note that $X$ is nonseparable because for every
separable subspace $Z$ of $X$ there exists a countable set
$B\subseteq \Gamma$ such that $z(\gamma)=0$ for every $\gamma\in
\Gamma\setminus B$.
\end{proof}

This example shows that there exists a quarter-stratifiable space
which has not the $B$-property. Thus, Proposition 3.1 cannot be
generalized for spaces with the $B$-property.

A family $(A_i:i\in I)$ of sets $A_i$ is called {\it pointwise
finite} if $\bigcap\limits_{i\in J} A_i=\O$ for each infinite set
$J\subseteq I$. A cardinal
$$
p(X)= \sup \{|\mathcal A|: {\mathcal
A}\mbox{\,\,is\,a\,\,pointwise\,\,finite\,\,family\,\,of\,\,nonempty
\,open\,\,in\,\,}X \mbox{\,\,sets}\}
$$
is called {\it a point-finite cellularity of a topological space
$X$}. Clearly $c(X)\leq p(X)$. Besides, it is known that
$p(X)=c(X)$ for each Baire space $X$. Therefore the following
question arises naturally from Theorem 2.1 and the fact that
$p(X)=|\Gamma|>\aleph_0$ for the space $X$ from Theorem~3.3.

\begin{question}
{\it Is every completely regular space $X$ with the $L$-property
and $p(X)=\aleph_0$ a separable space?}
\end{question}

\end{document}